\theoremstyle{plain}
\newtheorem{thm}{Theorem}[section]
\newtheorem{prop}[thm]{Proposition}
\theoremstyle{definition}
\newtheorem{defi}[thm]{Definition}
\theoremstyle{remark}
\newtheorem{remark}[thm]{Remark}
\newcommand{\lemref}[1]{\hyperref[#1]{Lemma \ref*{#1}}}
\newcommand{\thmref}[1]{\hyperref[#1]{Theorem \ref*{#1}}}
\newcommand{\propref}[1]{\hyperref[#1]{Proposition \ref*{#1}}}
\newcommand{\corref}[1]{\hyperref[#1]{Corollary \ref*{#1}}}
\newcommand{\defref}[1]{\hyperref[#1]{Definition \ref*{#1}}}
\newcommand{\remref}[1]{\hyperref[#1]{Remark \ref*{#1}}}
\newcommand{\conjref}[1]{\hyperref[#1]{Conjecture \ref*{#1}}}
\newcommand{\Inv}{\mathrm{Inv}}
\newcommand{\Aut}{\mathrm{Aut}}
\newcommand*{\defeq}{\mathrel{\rlap{%
                     \raisebox{0.27ex}{$\m@th\cdot$}}%
                     \raisebox{-0.27ex}{$\m@th\cdot$}}%
                     =}
\numberwithin{equation}{section}
\def\@setcopyright{}
\def\serieslogo@{}
\begin{document}

\title{An axiomatizable profinite group with infinitely many open subgroups of index 2}

\author{Or Ben Porath}
\address{Raymond and Beverly Sackler School of Mathematical Sciences, Tel-Aviv University, Tel-Aviv, Israel}
\email{orbenporath@mail.tau.ac.il}

\author{Mark Shusterman}
\address{Raymond and Beverly Sackler School of Mathematical Sciences, Tel-Aviv University, Tel-Aviv, Israel}
\email{markshus@mail.tau.ac.il} 
\date{}

\begin{abstract}
We show that a profinite group with the same first-order theory as the direct product over all odd primes $p$ of the dihedral group of order $2p$, is necessarily isomorphic to this direct product.
\end{abstract}

\maketitle

\section{Introduction}

We say that a profinite group $\Gamma$ is axiomatizable if for every profinite group $\Lambda$ with the same first-order theory as that of $\Gamma$, we have $\Lambda \cong \Gamma$.
The study of axiomatizable profinite groups began in \cite{JL}, 
where it is shown that finitely generated profinite groups are axiomatizable, 
and an example of a profinite group that is not axiomatizable is given (for instance, $\mathbb{Z}_2^{\aleph_0}$).
More generally, 
it is shown in \cite{H} that a strongly complete profinite group (that is, a profinite group all of whose finite index subgroups are open) is axiomatizable. 
Are there more axiomatizable profinite groups?

By \cite[Corollary 3.8]{H} a strongly complete profinite group is small (that is, it has only finitely many open subgroups of index $n$, for every $n \in \mathbb{N}$).
Thus, a possible precise formulation given by \cite[Question 3.15 (i)]{H} for the question we have just raised is whether every axiomatizable profinite group is small.
Here, a negative answer to this question is given.

\begin{thm} \label{Res}

The profinite group $G$ given by the direct product of the dihedral groups $D_p$ over all odd primes $p$, is axiomatizable. 

\end{thm}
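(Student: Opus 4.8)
Let $\Lambda$ be a profinite group with $\mathrm{Th}(\Lambda)=\mathrm{Th}(G)$. The plan is to extract from $\Lambda\equiv G$ enough first-order information to rebuild the product decomposition block by block; the point to keep in mind is that although $G$ is not small, each factor $D_p$ occurs exactly once, so it can be recognized by a bounded amount of first-order data, while the ``non-small'' part $\prod_p\mathbb Z/2$ of $G^{\mathrm{ab}}$ is completely determined by the blocks. First I would write down the following sentences, all routinely verified in $G$ and hence true in $\Lambda$: the center is trivial; there is no element of order $p^2$; for each odd prime $p$ the set $P_p:=\{x:x^p=1\}$ is a subgroup, is nontrivial, and any nontrivial element of it generates it (so $P_p\cong\mathbb Z/p$; it is automatically normal), and $P_p$ commutes with $P_q$ for $q\neq p$; every element of $\Lambda$ conjugates $P_p$ by $\pm1$ and some element conjugates it by $-1$ (so $\varepsilon_p\colon\Lambda\to\{\pm1\}$, ``action on $P_p$'', is onto with kernel $C_p:=C_\Lambda(P_p)$ open normal of index $2$); for every finite set $S$ of odd primes and every $\sigma\in\{\pm1\}^S$ some element of $\Lambda$ conjugates each $P_p$, $p\in S$, by $\sigma(p)$; and the set of squares equals the set of commutators, which is a subgroup $M$ — necessarily equal to $[\Lambda,\Lambda]$, closed (being a continuous image of $\Lambda\times\Lambda$) and abelian — satisfying $2M=M$, $\dim_{\mathbb F_p}M/pM\le1$ and $pM=p^2M$ for every odd $p$, and $P_p\subseteq M$.

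Next I would analyze $M$. Being abelian profinite it is the product of its Sylow subgroups; the $2$-part is trivial since $2M=M$, and for odd $p$ the $p$-part is procyclic of exponent $p$ (from $\dim_{\mathbb F_p}M/pM\le1$ and $pM=p^2M$), hence, containing the nontrivial $P_p\cong\mathbb Z/p$, equals $P_p$. Thus $M\cong\prod_p\mathbb Z/p$ with $M_p=P_p$, and $M=\overline{\langle P_p:p\text{ odd prime}\rangle}$. Moreover, for $p$ odd any pro-$p$ subgroup $S\le\Lambda$ maps to a pro-$p$ subgroup of $\Lambda/M$, which has exponent $2$, so $S\subseteq M$, hence $S\subseteq M_p=P_p$; therefore $P_p$ is the (finite, normal) Sylow pro-$p$ subgroup of $\Lambda$. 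Since $P_p$ is central in $C_p$ and is its Sylow pro-$p$ subgroup, the profinite version of Burnside's normal $p$-complement theorem gives $C_p=P_p\times K_p$ with $K_p=O_{p'}(C_p)$ characteristic in $C_p$, hence normal in $\Lambda$. As $[\Lambda:C_p]=2$, $C_p/K_p\cong\mathbb Z/p$, and $\Lambda/C_p$ acts on it by $-1$, the quotient $\Lambda/K_p$ is the (split, since $\gcd(2,p)=1$) extension $\mathbb Z/p\rtimes_{-1}\mathbb Z/2\cong D_p$; write $\rho_p\colon\Lambda\to D_p$ for this surjection, and note $\mathrm{sgn}\circ\rho_p=\varepsilon_p$.

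Now I would show that for every finite set $S$ of odd primes the map $\rho_S:=(\rho_p)_{p\in S}\colon\Lambda\to\prod_{p\in S}D_p$ is onto. Its image contains $\rho_S(M)$, and since $\rho_p(P_q)$ is a quotient of $\mathbb Z/q$ — trivial if $q\neq p$, equal to the rotation subgroup if $q=p$ — we get $\rho_S(M)\supseteq\prod_{p\in S}P_p=[\prod_{p\in S}D_p,\prod_{p\in S}D_p]$, so the image is a normal subgroup with abelian quotient $(\mathbb Z/2)^S$; but the composite $\Lambda\xrightarrow{\rho_S}\prod_{p\in S}D_p\to(\mathbb Z/2)^S$ is $g\mapsto(\varepsilon_p(g))_{p\in S}$, which is onto by the ``all sign patterns occur'' sentence, so the image is all of $\prod_{p\in S}D_p$. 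Consequently $\rho_S$ identifies $\Lambda/K_S\cong\prod_{p\in S}D_p$, where $K_S:=\bigcap_{p\in S}K_p$, compatibly with the coordinate projections; hence the induced map $\Lambda\to\varprojlim_S\prod_{p\in S}D_p\cong G$ is onto with kernel $R:=\bigcap_p K_p$, and it remains to prove $R=1$.

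This last point is where I expect trouble a priori: non-smallness forbids the usual strategy of counting continuous surjections onto the finite quotients of $G$, and this is exactly the phenomenon behind the non-axiomatizability of $\mathbb Z_2^{\aleph_0}$. The key realization is that the structure already assembled forces $R$ into the center. Indeed $R\trianglelefteq\Lambda$ and, inside $M=\prod_p P_p$, $R\cap M=\bigcap_p(K_p\cap M)=\bigcap_p\ker(\rho_p|_M)=\bigcap_p\prod_{q\neq p}P_q=1$; since $M=[\Lambda,\Lambda]$, for any $g\in\Lambda$ and $r\in R$ the commutator $[g,r]$ lies in $R$ (normality) and in $M$ (it is a commutator), so $[g,r]\in R\cap M=1$. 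Hence $R\le Z(\Lambda)=1$, so $R=1$ and $\Lambda\cong\varprojlim_S\prod_{p\in S}D_p=\prod_p D_p=G$. The only genuinely delicate part of carrying this out, I anticipate, is logistical rather than conceptual: verifying that the conditions of the first paragraph really are first-order — in particular phrasing ``$\dim_{\mathbb F_p}M/pM\le1$'' and ``$pM=p^2M$'' as finite disjunctions of statements of the form ``some fixed word in the given two elements is a $p$-th power of a square'' — and choosing the list so that, despite $G$ having infinitely many open subgroups of index $2$, it pins $\Lambda$ down.
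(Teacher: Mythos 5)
Your argument is correct, but it takes a genuinely different route from the paper's. The paper's proof is much shorter and rests on an external black box: it observes that the set of commutators $\widetilde C$ of $\widetilde G$ is a closed subgroup, shows by a purely syntactic translation (replacing each quantified variable $x$ by a commutator $[x_1,x_2]$) that $\widetilde C$ is elementarily equivalent to the procyclic group $C=\prod_p C_p$, and then invokes the Jarden--Lubotzky theorem that finitely generated profinite groups are axiomatizable to get $\widetilde C\cong C$ outright; Schur--Zassenhaus then splits $\widetilde G\cong\widetilde C\rtimes_\tau\widetilde G/\widetilde C$, and $\tau$ is an isomorphism onto $\Inv(\widetilde C)$ by the centralizer sentence \eqref{InjEq} (injectivity) and the sentences \eqref{SurjEq} producing the $\epsilon_p$ (surjectivity). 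You instead rebuild $[\Lambda,\Lambda]\cong\prod_p\mathbb{Z}/p$ by hand from the definable $p$-torsion and the conditions $\dim M/pM\le1$, $pM=p^2M$ --- in effect reproving the special case of Jarden--Lubotzky that the paper cites --- and you need the profinite Burnside normal $p$-complement theorem to manufacture the quotients $\Lambda\to D_p$, where the paper needs nothing beyond Schur--Zassenhaus. The trade-off: your proof is self-contained and makes the finite quotients of $\Lambda$ explicit, at the cost of length and of the Burnside input; your endgame ($R\cap[\Lambda,\Lambda]=1$ together with $Z(\Lambda)=1$) is a pleasant substitute for the paper's injectivity of $\tau$, and your ``all sign patterns occur'' family substitutes for its surjectivity. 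One small repair: the abelianness of $M$ does not follow from the properties you list --- add the (true in $G$) axiom $\forall a,b,c,d\ [a,b][c,d]=[c,d][a,b]$, which you then use both to decompose $M$ into its Sylow factors and to get $P_q\le C_\Lambda(P_p)$.
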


From our proof of \thmref{Res} one can extract an explicit (infinite) set of axioms characterizing $G$ up to an isomorphism.

\section{The group $G$ and its properties}

\begin{defi} \label{AutDef}
For a profinite group $\Gamma$ we set
\begin{equation}
\Inv(\Gamma) \defeq \{\tau \in \mathrm{Aut}(\Gamma) \ | \ \tau^2 = \mathrm{Id}_ {\Gamma} \}
\end{equation} 
and note that this defines a group if $\Aut(\Gamma)$ is abelian.
\end{defi}

\begin{defi} \label{DihedDef}
For a prime number $p$ we denote by $C_{p}$ the cyclic group of order $p$. 
If $p$ is odd, then $\mathrm{Aut}(C_p) \cong C_{p-1}$ so $\Inv(C_p)$ is a group isomorphic to $C_2$.
The semidirect product $C_p \rtimes \Inv(C_p)$ is the dihedral group $D_p$.
Let $\rho_p$ be a generator of $C_p$, and let $\epsilon_p$ be a generator of $\Inv(C_p)$ so that they generate $D_p$ and we have $\epsilon_p\rho_p\epsilon_p = \rho_p^{-1}$.
\end{defi}

\begin{prop} \label{ComDiProp}
For an odd prime $p$ we have $C_p = \{[a,b] \ | \ a,b \in D_p\}$.
\end{prop}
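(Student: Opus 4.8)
The plan is to prove the two inclusions separately. The inclusion $\{[a,b] \mid a,b \in D_p\} \subseteq C_p$ is the easy one: since $C_p$ is normal in $D_p$ with quotient $D_p/C_p \cong \Inv(C_p) \cong C_2$, which is abelian, every commutator $[a,b]$ with $a,b\in D_p$ maps to the identity in the abelianization $D_p/C_p$ and therefore lies in $C_p$.

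For the reverse inclusion I would exhibit each element of $C_p$ as an explicit commutator, using the relation $\epsilon_p\rho_p\epsilon_p = \rho_p^{-1}$ together with $\epsilon_p^2 = \mathrm{Id}$. Writing $[a,b] = a^{-1}b^{-1}ab$ (the other convention yields the same conclusion after replacing an element by its inverse), for every integer $j$ one computes
\[
[\rho_p^{\,j}, \epsilon_p] \;=\; \rho_p^{-j}\,\epsilon_p\,\rho_p^{\,j}\,\epsilon_p \;=\; \rho_p^{-j}\,(\epsilon_p\rho_p\epsilon_p)^{j} \;=\; \rho_p^{-j}\,\rho_p^{-j} \;=\; \rho_p^{-2j},
\]
where the middle equality uses $\epsilon_p^{-1}=\epsilon_p$ to insert $\epsilon_p\epsilon_p=\mathrm{Id}$ between successive factors of $\rho_p^{\,j}$. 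Since $p$ is odd, $2$ is invertible modulo $p$, so as $j$ ranges over $\{0,1,\dots,p-1\}$ the exponent $-2j$ ranges over all residues modulo $p$; hence every element $\rho_p^{\,k}$ of $C_p$ equals $[\rho_p^{\,j},\epsilon_p]$ for a suitable $j$. Combining the two inclusions gives $C_p = \{[a,b]\mid a,b\in D_p\}$.

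The computation is entirely routine inside the dihedral group; the only point that requires attention — and the sole place the hypothesis that $p$ is odd enters — is the invertibility of $2$ modulo $p$ in the final step. (For $p=2$ the same computation would only produce the trivial element, consistently with $D_2$ being abelian.)
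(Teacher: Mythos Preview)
Your proof is correct and follows essentially the same approach as the paper: both directions are handled the same way, and the reverse inclusion in each case is obtained by exhibiting an explicit commutator equal to $\rho_p^{\pm 2k}$ and then invoking the invertibility of $2$ modulo the odd prime $p$. The only cosmetic difference is the particular pair chosen---you use $[\rho_p^{\,j},\epsilon_p]$ while the paper uses $[\epsilon_p,\epsilon_p\rho_p^{\,k}]$---and the commutator convention, neither of which affects the argument.
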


\begin{proof}
For one inclusion note that $D_p/C_p$ is abelian, and for the other one take some $\rho_p^n \in C_p$.
As $p$ is odd, there exists a $k \in \mathbb{Z}$ such that $2k \equiv n \ (p)$.
We find that $[\epsilon_p, \epsilon_p\rho_p^k] = \epsilon_p\epsilon_p\rho_p^k\epsilon_p\rho_p^{-k}\epsilon_p = \rho_p^k\epsilon_p\rho_p^{-k}\epsilon_p = \rho_p^k\rho_p^k = \rho_p^{2k} = \rho_p^n.$
\end{proof}

\begin{remark} \label{DiRem}
A similar argument shows that $C_p \leq D_p$ is its own centralizer.
\end{remark}

\begin{defi} \label{FundDef}

We set $G \defeq \prod D_p, \ C \defeq \prod C_p, \ E \defeq \prod \langle \epsilon_p \rangle$ where the products (here and in the sequel) are always taken over all odd primes $p$.

\end{defi}

Since $C_p \lhd D_p$ and $D_p/C_p \cong C_2$ for all odd primes $p$, 
we conclude that $C$ is a closed normal procyclic subgroup of $G$ with $G/C \cong (\mathbb{Z}/2\mathbb{Z})^{\aleph_0}$.
Hence,
\begin{equation} \label{SquareEq}
\forall g \in G \ \ g^2 \in C
\end{equation}
and $G/C$ is not small.
Therefore, $G$ is not small as well.
Furthermore, we have $G = C \rtimes E$.
Since the Sylow subgroups of $C$ are normal, we see that
\begin{equation}
\Aut(C) = \prod_p \Aut(C_p) \cong \prod_p C_{p-1}
\end{equation}
is an abelian group, so
\begin{equation}
\Inv(C) = \prod_p \Inv(C_p) = E.
\end{equation}
Thus, 
\begin{equation} \label{GStructEq}
G \cong C \rtimes \Inv(C).
\end{equation}

\begin{defi} \label{ProfComDef}
For a profinite group $\Gamma$ we denote by $\Gamma'$ its profinite commutator, which is the closed subgroup of $\Gamma$
generated by $\{[a,b] \ | \ a,b \in \Gamma \}$.
\end{defi}

It follows from \propref{ComDiProp} that
\begin{equation} \label{CDescEq}
G' = C = \{[a,b] \ | \ a,b \in G \}
\end{equation} 
so the following first-order sentence holds in $G$
\begin{equation} \label{ComLenEq}
\forall a, b, c, d \ \exists r, s \ \ [a,b][c,d] = [r,s]. 
\end{equation}

\begin{remark} \label{SquareRmk}

By \eqref{SquareEq}, \eqref{CDescEq} the following first-order sentence is valid in $G$
\begin{equation} \label{SqEq}
\forall g \ \exists h, k \ \ g^2 = [h,k].
\end{equation}

\end{remark}

\begin{remark} \label{InjRmk}

It follows from \remref{DiRem} that $C$ is its own centralizer in $G$.
In view of \eqref{CDescEq}, this is tantamount to the following first-order sentence
\begin{equation} \label{InjEq}
\forall x \ \Bigg( \Big( \forall y, z \ \ x[y,z]x^{-1}=[y,z] \Big) \longrightarrow \exists a,b \ \ x=[a,b] \Bigg).
\end{equation}

\end{remark}

\begin{remark} \label{SurjRmk}

Fix an odd prime $p$.
We can think of $\epsilon_p, \rho_p$ as elements of $E,C$ respectively, 
and thus also as elements of $G$. 
The first-order sentence
\begin{equation} \label{SurjEq}
\exists x \ \forall y, z \ \ x[y,z]x^{-1} = [y,z]^{-1} \longleftrightarrow [y,z]^{p} = 1 
\end{equation}
holds in $G$ since we can take $x = \epsilon_p$.

\end{remark}

\begin{defi} \label{EEDef}
We say that profinite groups $\Gamma, \Lambda$ are elementarily equivalent if they have the same first-order theory, 
and denote this by $\Gamma \equiv \Lambda$.  
\end{defi}

\section{The proof of \thmref{Res}}
Let $\widetilde{G}$ be a profinite group for which $G \equiv \widetilde{G}$.
Set
\begin{equation} \label{DefCTEq}
\widetilde{C} \defeq \{[g,h] \ | \ g,h \in \widetilde{G}\}
\end{equation} 
and note that it is the image in $\widetilde{G}$ of the compact space $\widetilde{G}^2$ under the continuous map sending $(g,h) \in \widetilde{G}^2$ to $[g,h] \in \widetilde{G}$.
It follows that $\widetilde{C}$ is compact, and thus closed in $\widetilde{G}$.
By \eqref{ComLenEq} $\widetilde{C}$ is a subgroup of $\widetilde{G}$.

Let us now show that $C \equiv \widetilde{C}$.
For that take a first-order sentence $\varphi$ that holds in $C$.
For every variable $x$ that appears in $\varphi$, replace each appearance of $Qx$ by $Q x_1, x_2$ where $Q \in \{\forall, \exists\}$ and $x_1,x_2$ are new variables.
Furthermore, replace any instance of $x$ in any atomic formula in $\varphi$ by $[x_1, x_2]$ and denote the resulting first-order sentence by $\psi$.
It follows from \eqref{CDescEq} that $\psi$ holds in $G$, and thus also in $\widetilde{G}$. 
By \eqref{DefCTEq}, $\varphi$ holds in $\widetilde{C}$ as required.

By \cite[Theorem A]{JL}, $C \cong \widetilde{C}$ and by \eqref{SqEq} every element of $\widetilde{G}/\widetilde{C}$ is of order dividing $2$, so $|\widetilde{G}/\widetilde{C}|$ is prime to $|\widetilde{C}| = |C|$.
Since $\widetilde{C} \cong C$ is abelian and normal in $\widetilde{G}$, 
the action by conjugation of $\widetilde{G}$ on $\widetilde{C}$ gives rise to a continuous homomorphism $\tau \colon \widetilde{G}/\widetilde{C} \to \Inv(\widetilde{C})$.
By the Schur-Zassenhaus theorem (see \cite[Lemma 22.10.1]{FJ}) we get that
\begin{equation} \label{FactorGtilEq}
\widetilde{G} \cong \widetilde{C} \rtimes_\tau \widetilde{G}/\widetilde{C}.
\end{equation}

By \remref{InjRmk} $\tau$ is injective, 
and in order to see that it is also surjective, first identify $\Inv(\widetilde{C})$ with $\Inv(C)$.
A generating set for $\Inv(\widetilde{C})$ is thus given by $\{\epsilon_p\}_p$.
By \remref{SurjRmk} the image of $\tau$ contains $\epsilon_p$ for each odd prime $p$, 
so $\tau$ is a surjection, and thus an isomorphism. 
We conclude that
\begin{equation}
G \stackrel{\ref{GStructEq}}{\cong} C \rtimes \Inv(C) \cong \widetilde{C} \rtimes \Inv(\widetilde{C}) \cong \widetilde{C} \rtimes_\tau \widetilde{G}/\widetilde{C} \stackrel{\ref{FactorGtilEq}}{\cong} \widetilde{G}.
\end{equation}

\section*{Acknowledgments}
We would like to sincerely thank Arno Fehm for telling us about the question that motivated this work, and for many helpful discussions.

\end{document}